\definecolor{vert}{RGB}{20,150,100}
\definecolor{rouge}{RGB}{250,50,50}
\newcommand{\R}{\mathbb{R}}
\newcommand{\Rd}{\R^d}
\newcommand{\Co}{\mathcal{C}^0}
\newcommand{\btq}{~ \big| ~}
\renewcommand{\phi}{\varphi}
\newcommand{\eps}{\varepsilon}
\newcommand{\A}{\mathcal{A}}
\renewcommand{\O}{\Omega}
\newcommand{\be}{\begin{equation}}
\newcommand{\ee}{\end{equation}}
\numberwithin{equation}{section}
\theoremstyle{plain}
\newtheorem{trm}{Theorem}[section]
\newtheorem{lem}[trm]{Lemma}
\newtheorem{cor}[trm]{Corollary}
\newtheorem{prop}[trm]{Proposition}
\theoremstyle{remark}
\newtheorem{rem}[trm]{\bf Remark}
\title[Full spectrum control by the second eigenvalue]{Quantitative stability control of the full spectrum of the Dirichlet Laplacian by the second eigenvalue}
\author{Alexis de Villeroché}
\date{}
\begin{document}
\begin{abstract}
Let $\Omega\subset \mathbb{R}^d$ be an open set of finite measure and let $\Theta$ be a disjoint union of two balls of half measure. We study the stability of the full Dirichlet spectrum of $\Omega$ when its second eigenvalue is close to the second eigenvalue of $\Theta$. Precisely, for every integer $k \ge 1$,  we provide a quantitative control of the difference $|\lambda_k(\Omega)-\lambda_k(\Theta)|$ by the variation of the second eigenvalue $C(d,k)(\lambda_2(\Omega)-\lambda_2(\Theta))^\alpha$, for a suitable exponent $\alpha$ and a positive constant $C(d,k)$ depending only on the dimension of the space and the index $k$. We are able to find such an estimate for general $k$ and arbitrary $\Omega$  with $\alpha =\alpha_d/(d+1)^2$ where $\alpha_2 = 1/2$ and $0<\alpha_d<1$ in higher dimensions. In the particular case where $\lambda_k(\Omega)\ge \lambda_k(\Theta)$,  we can improve the inequality and find an estimate with the sharp exponent $\alpha = 1/2$.
\end{abstract}
\maketitle

\section{Introduction}\label{Intro}
We work in the Euclidean space $\Rd$, for some $d\ge 2$. Let us denote by $\omega_d$ the volume of the unit ball of $\Rd$ and  set
$$
\A = \big\{\Omega \subset \Rd ~\big|~\Omega\text{ open and } |\Omega|=\omega_d\big\}.
$$
In the following, by $B\in \A$ we  denote a  ball of radius equal to $1$ and by $\Theta \in \A$ a union of two disjoint balls of measure $\omega_d/2$. The position of the balls does not affect the spectrum of the Dirichlet Laplacian, however, in our analysis their position may play a role. This will be specified, when necessary.

For any $\Omega\in \A$, let us consider the k-th eigenvalue of the Dirichlet Laplacian, multiplicity being counted. For $k\ge 1$,
\begin{equation}
\lambda_k(\Omega) =\min_{S_k\subset H^1_0(\Omega)} \sup_{u\in S_k,\,u\neq 0} \frac{\int_\Omega |\nabla u|^2}{\int_\Omega u^2},
\end{equation}
where $S_k$ is a subspace of $H^1_0(\Omega)$ of dimension $k$. The associated eigenfunctions which achieve the minimum are denoted by $u_{\O,k}$ and solve 
\begin{equation}
\left\{\begin{array}{ll}
-\Delta u_{\O,k} = \lambda_k(\Omega)\, u_{\O,k} &\text{ in }\Omega,\\
u_{\O,k} = 0 &\text{ in } \partial \Omega.
\end{array}\right.
\end{equation} 
If not otherwise specified,  we consider them  normalized in $L^2(\Omega)$.

Minimizing $\lambda_k(\Omega)$ for $\Omega \in \A$ is an important question in spectral geometry, as it is related to the celebrated P\' olya conjecture stating 
$$\forall k \ge 1, \forall \; \Omega \in \A, \;\; \lambda_k(\Omega) \ge \frac{4\pi^2 k^{2/d}}{\omega_d^{4/d}}.$$

One strategy to understand the conjecture is, for given integer $l \ge 1 $, to solve the minimization problem
$$\min\{\lambda_l(\Omega) : \Omega \in \A\},$$
precisely to characterize solutions and find their properties. The key element, is that the P\'olya conjecture is known to hold for some {\it particular} class of domains and so, a suitable characterization of the solution may provide useful information.

Assume $\Omega_l^*$ is a minimizer of $\lambda_l$ in $\A$ and $\Omega \in \A$ is a set such that $\lambda_l(\Omega)$ approaches the minimal value $\lambda_l(\Omega_l^*)$.
We aim for a sharp control of the variation of the $k$-th eigenvalue by the variation of the $l$-th eigenvalue. Precisely,  is the following inequality true
\be\label{jo01}
\forall k\ge 1, \forall \Omega \in \A, \; \;  |\lambda_k(\Omega)-\lambda_k(\Omega_l^*)|\le C(d,k) [\lambda_l(\Omega)-\lambda_l(\Omega_l^*)]^\alpha \; ?
\ee
Above, $\alpha$ is a suitable dimensional exponent and $C(d,k)$ is a constant depending only on the dimension and $k$. Implicitly, such an inequality implies either uniqueness of $\O_l$ or isospectrality of all minimizers.

At the moment, the minimizers   $\Omega_l^*$ are analytically known only for $l=1$ and $l=2$ and are respectively the ball $B$ and the  union of two disjoint equal balls, that we generally denote $\Theta$. This is a consequence of  the Faber-Krahn  and the Krahn-Szeg\"o inequalities (see for example \cite{H06} and \cite{HP05}).

For $l\ge 3$ some qualitative results are known. For instance,  Bucur \cite{B12} and Mazzoleni and Pratelli \cite{MP13} proved that a minimizer exists in the larger class of quasi-open sets of measure $\omega_d$. The minimizers are bounded and of finite perimeter. Moreover, the structure of their reduced boundary was analyzed by Kriventsov and Lin in \cite{KrLi18, KrLi19}, but the qualitative information about their global geometry is missing. Thus, since so little is known in the case $l\ge 3$, we expect to be able to find stability estimates like those in \eqref{jo01} only for the cases $l=1$ and $l=2$.

The case $l=1$ has already been studied in the literature. As we already stated, we know that the ball $B$ is the minimizer to $\lambda_1$ as asserted by the Faber-Krahn inequality
\begin{equation}
\forall \Omega \in \A,\qquad\lambda_1(\Omega)\ge \lambda_1(B),
\end{equation}
with equality if and only if $\Omega$ is a ball.

In 2006, Bertrand and Colbois in \cite{BC06} where able to prove stability estimates near the ball. Precisely, they prove that for any $\Omega \in \A$ such that
$$\lambda_1(\Omega)\le (1+\eps)\lambda_1(B),$$
it holds, if $\eps< \eps_k$ small enough,
\begin{equation}
|\lambda_k(\Omega)-\lambda_k(B)| \le C_{d,k}\  \eps^\frac{1}{80\,d},
\end{equation}
Clearly, the exponent $\frac{1}{80\,d}$ is not optimal, and later in 2019, Mazzoleni and Pratelli \cite{MP19} improved the result and obtained for $\lambda_1(\Omega) \le \lambda_1(B) + 1$ and for any $\eta>0$
\begin{equation}
- C_{d,k, \eta} (\lambda_1(\Omega)-\lambda_1(B))^{\frac{1}{6}-\eta}\le \lambda_k(\Omega)-\lambda_k(B)\le C_{d,k,\eta} (\lambda_1(\Omega)-\lambda_1(B))^{\frac{1}{12}-\eta},
\end{equation}
and they are able to improve the exponent in dimension $2$.\\
The sharp estimate has been obtained in 2023, by Bucur, Lamboley, Nahon and Prunier in \cite{BLNP23}, where they prove for any $\Omega\in\A$ and  $k$
\begin{equation}
\label{eq:lbd1}
|\lambda_k(\Omega)-\lambda_k(B)|\le C_d k^{2+\frac{4}{d}} \lambda_1(\Omega)^\frac{1}{2}\,(\lambda_1(\Omega)-\lambda_1(B))^\frac{1}{2}
\end{equation}
A key ingredient of the proof is the  quantitative Faber-Krahn inequality proved in 2015 by Brasco, de Philippis and Velichkov in \cite{BPV15}, 
\begin{equation}
\label{eq:qFKI}
\forall \Omega \in \A, \qquad\lambda_1(\Omega)\ge \lambda_1(B)\,(1+C_d\,\mathcal{F}_1(\Omega)^2).
\end{equation}
where $\mathcal{F}_1(\Omega)$ denotes the \textit{Fraenkel asymmetry} 
$$
\mathcal{F}_1(\Omega) = \min\left\{\frac{|\Omega\Delta B|}{|\Omega|} ~\middle|~ B\subset \Rd \text{ ball with } |B| = |\Omega|\right\}.
$$
We point out that inequality \eqref{eq:lbd1} can be improved for some specific values of $k$. Precisely, if $\lambda_k(B)$ is simple, then the exponent $1/2$ on the right hand side can be replaced by the exponent $1$. This result is very fine and relies on the analysis of a degenerate free boundary problem of vectorial type. The key point is that if $\lambda_k(B)$ is simple then the ball is a critical set for $\lambda_k$, as for $\lambda_1$. Intuitively this makes that the variation of $\lambda_k$ is of the same order as the variation of $\lambda_1$. \\
Another related work by Allen, Kriventsov and Neumayer (see \cite{AKN25}) concerns the stability in the Faber Krahn inequality but with respect to some norm $\|.\|$ of the resolvent operator $R_\O$ instead of the spectrum. They show for a well positioned ball $B$,
\be 
\forall \O \in \A,\qquad \lambda_1(\O) - \lambda_1(B) \ge C_d \|R_\O-R_B\|^2 
\ee
and from this first result, they deduce that the deficit in the first eigenvalue quantitatively controls the deficit in $L^2$ norm of the $k$-th eigenfunctions, 
\be 
\|u_{\O,k} - u_{B,k}\|_{L^2} \le C_{k,d} (\lambda_1(\O)-\lambda_1(B))^\frac12
\ee
where the exponent $1/2$ is sharp.

Our goal is to analyze the case $l=2$ and to prove an inequality similar to \eqref{eq:lbd1} with the variation of the second eigenvalue on the right hand side, variation with respect to its global minimizer which is  the disjoint union of two balls of half measure, $\Theta$. 
Indeed, we know from the Krahn-Szeg\"o inequality that $\Theta$ minimizes $\lambda_2$ in $\A$,
\begin{equation}
\label{eq:KSI}
\forall \Omega\in\A,\qquad\lambda_2(\Omega)\ge \lambda_2(\Theta),
\end{equation}
with equality if and only if $\Omega =\Theta$.
Moreover, a quantitative inequality has been proved by Brasco and Pratelli \cite[Theorem 3.5]{BP12} in 2013.
\begin{equation}
\label{eq:qKSI}
\forall \Omega \in \A,\qquad \lambda_2(\Omega)\ge \lambda_2(\Theta)\,(1+C_d\,\mathcal{F}_2(\Omega)^{d+1}),
\end{equation}
Here, $\mathcal{F}_2(\Omega)$ is the \textit{Fraenkel 2-asymmetry} defined as follows for any open set $\Omega \subset \Rd$
\be \label{2Fasym}
\mathcal{F}_2(\Omega) = \inf\left\{\left.\frac{|\Omega\Delta(B_1\cup B_2)|}{|\Omega|}\right|\,B_{1}, B_{2} \subset \Rd \text{ balls, } \,|B_1\cap B_2| = 0 \text{ and } |B_{1}|=|B_{2}|=\frac{|\Omega|}{2}\right\}.
\ee
The exponent given for Inequality \eqref{eq:qKSI} is $d+1$. Indeed, following the remark of the authors about the exponent they obtained in their proof \cite[Remark 3.6]{BP12}, at the time of release of their paper the sharp inequality \eqref{eq:qFKI} was not proved to hold  and it improves the exponent from $2(d+1)$ to $d+1$. They also point out that whether the exponent $d+1$ is sharp is not clear but that the sharp exponent has to be larger than $\frac{d+1}{2}$ (see also the discussion in \cite[Remark 7.54-7.56]{BdP17}).

Below we state our main results.
\begin{trm}
\label{TH1}
There exists two dimensional constants $C_d, \alpha_d>0$ such that for any $\Omega \in \A$, and for any $k\ge 1$, it holds,
$$
\left|\lambda_k(\Omega)-\lambda_k(\Theta)\right|\le C_d\,k^{2+\frac{4}{d}}\,\lambda_2(\Omega)^{1- \frac{\alpha_d}{(d+1)^2}}\,(\lambda_2(\Omega)-\lambda_2(\Theta))^\frac{\alpha_d}{(d+1)^2}.
$$
Moreover $\alpha_d < 1$.
\end{trm}
In dimension $2$ we justify that $\alpha_2 = 1/2$ is convenient, however not sharp (see Section \ref{sec2}, Remark \ref{rm:holderdim2}). In higher dimensions the values of $\alpha_d$ are not identified, they are related to Hölder estimates for the torsion function of a specific class of open sets (described in Section \ref{sec2}, Proposition \ref{prop:holderestimate}) which satisfy a uniform capacity density condition.\\
In general, we do not expect the exponent $\frac{\alpha_d}{(d+1)^2}$ to be sharp. We are able to improve the exponent to the sharp one $1/2$ in the case that $\lambda_k(\Omega) \ge \lambda_k(\Theta)$. Our second result reads
\begin{trm}
\label{TH2bis}
There exists a dimensional constant $C_d>0$ such that for any $\Omega \in \A$, and for any $k\ge 1$, it holds,
$$
\left(\lambda_k(\Omega)-\lambda_k(\Theta)\right)_+\le C_d\,k^{2+\frac{4}{d}}\,\lambda_2(\Omega)^{\frac{1}{2}}\,(\lambda_2(\Omega)-\lambda_2(\Theta))^\frac{1}{2},$$
where by $(a)_+$ we denote the positive part of the number $a$.
\end{trm}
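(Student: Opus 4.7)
My plan is to reduce the proof to the sharp first-eigenvalue stability estimate \eqref{eq:lbd1} applied to each nodal component of a second Dirichlet eigenfunction of $\Omega$. Since only the positive part appears on the left-hand side, we may assume $\lambda_k(\Omega) > \lambda_k(\Theta)$.

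Let $u_2 \in H^1_0(\Omega)$ be a second Dirichlet eigenfunction and set $\Omega^\pm := \{\pm u_2 > 0\}$, so that $|\Omega^+| + |\Omega^-| = \omega_d$ and each restriction $\pm u_2|_{\Omega^\pm}$ is a positive first eigenfunction of $\Omega^\pm$; in particular $\lambda_1(\Omega^\pm) \leq \lambda_2(\Omega)$. The decisive inequality is $\lambda_k(\Omega) \leq \lambda_k(\Omega^+ \sqcup \Omega^-)$, which follows by min-max from the inclusion $H^1_0(\Omega^+) \oplus H^1_0(\Omega^-) \subset H^1_0(\Omega)$. Setting $\alpha^\pm := 2|\Omega^\pm|/\omega_d$ and letting $B^\pm$ be a ball of measure $|\Omega^\pm|$, Faber-Krahn gives $(\alpha^\pm)^{-2/d}\lambda_2(\Theta) = \lambda_1(B^\pm) \leq \lambda_1(\Omega^\pm) \leq \lambda_2(\Omega)$; combined with $\alpha^+ + \alpha^- = 2$, this forces the sharp \emph{linear} bound
\[
|\alpha^\pm - 1| \leq C_d\,\frac{\lambda_2(\Omega) - \lambda_2(\Theta)}{\lambda_2(\Theta)}.
\]

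I then apply \eqref{eq:lbd1} to each $\Omega^\pm$ (rescaled to volume $\omega_d$) for every index $j \leq \lceil k/2 \rceil$. Using $\lambda_1(\Omega^\pm) \leq \lambda_2(\Omega)$ and $\lambda_1(\Omega^\pm) - \lambda_1(B^\pm) \leq \lambda_2(\Omega) - \lambda_1(B^\pm) \leq C_d(\lambda_2(\Omega) - \lambda_2(\Theta))$ (which follows from the previous step), this gives
\[
\lambda_j(\Omega^\pm) \leq \lambda_j(B^\pm) + C_d\,j^{2+4/d}\,\lambda_2(\Omega)^{1/2}\,(\lambda_2(\Omega) - \lambda_2(\Theta))^{1/2}.
\]
Combining this with $\lambda_j(B^\pm) = (\alpha^\pm)^{-2/d}\lambda_j(B_{1/2})$, the Weyl-type bound $\lambda_j(B_{1/2}) \leq C_d j^{2/d}\lambda_2(\Theta)$, and the volume control gives $\lambda_j(B^\pm) \leq \lambda_j(B_{1/2}) + C_d j^{2/d}(\lambda_2(\Omega) - \lambda_2(\Theta))$, and this linear error is absorbed into the square-root term via $\lambda_2(\Omega) - \lambda_2(\Theta) \leq \lambda_2(\Omega)$. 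Finally, since the spectrum of $\Omega^+ \sqcup \Omega^-$ is the sorted union of the two spectra, $\lambda_k(\Omega^+ \sqcup \Omega^-) \leq \max\bigl(\lambda_{\lceil k/2 \rceil}(\Omega^+),\lambda_{\lceil k/2 \rceil}(\Omega^-)\bigr)$; together with $\lambda_k(\Theta) = \lambda_{\lceil k/2 \rceil}(B_{1/2})$, chaining all the previous estimates yields the stated inequality.

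The main point of care is the linear (rather than $(d+1)$-th root) volume control derived above: this is precisely what, combined with the sharp exponent $\tfrac{1}{2}$ in \eqref{eq:lbd1}, yields the sharp exponent $\tfrac{1}{2}$ in the conclusion. It also explains why only the positive part can be controlled in this way: the nodal decomposition produces eigenvalues on $\Omega^+ \sqcup \Omega^-$ that always \emph{exceed} those of $\Omega$, so no matching lower bound on $\lambda_k(\Omega)$ is available from this argument alone.
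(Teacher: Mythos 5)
Your argument is correct in substance and takes a genuinely different route from the paper. The paper proves Theorem \ref{TH2bis} by first establishing the two-sided Theorem \ref{TH2}, whose proof runs entirely through torsion: the triangle inequality $|\lambda_k(\Omega^+\cup\Omega^-)-\lambda_k(\Theta)|\le |\lambda_k(\Omega^+\cup\Omega^-)-\lambda_k(B^+\cup B^-)|+|\lambda_k(B^+\cup B^-)-\lambda_k(\Theta)|$, each term converted to a torsion deficit via Lemma \ref{lm:B}, and then the torsion deficits bounded using the torsion form of the first-eigenvalue stability result from \cite{BLNP23} together with Kohler--Jobin. You instead bypass torsion entirely: you apply \eqref{eq:lbd1} directly to each component $\Omega^\pm$ and exploit the spectral-doubling identities $\lambda_k(\Theta)=\lambda_{\lceil k/2\rceil}(B_{1/2})$ and $\lambda_k(\Omega^+\sqcup\Omega^-)\le\max(\lambda_{\lceil k/2\rceil}(\Omega^+),\lambda_{\lceil k/2\rceil}(\Omega^-))$. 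Both proofs hinge on the same linear volume-balance estimate (the paper's inequality \eqref{eq:claim} is exactly the content of your $|\alpha^\pm-1|\lesssim (\lambda_2(\Omega)-\lambda_2(\Theta))/\lambda_2(\Theta)$); this linear control is what lets the exponent $\frac12$ from \eqref{eq:lbd1} survive. What your route buys is brevity and a cleaner conceptual picture — you also avoid the case split $\lambda_2(\Omega)<2\lambda_2(\Theta)$, since the direct lower bound $\alpha^\pm\ge(\lambda_2(\Theta)/\lambda_2(\Omega))^{d/2}$ already gives $(\alpha^\pm)^{-2/d}-1\le(\lambda_2(\Omega)-\lambda_2(\Theta))/\lambda_2(\Theta)$ without further restriction. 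What you lose is the two-sided statement of Theorem \ref{TH2}: your interlacing argument only pushes one way, which is exactly why you only control the positive part, as you observe. One small point of rigor to fix: invoking the nodal decomposition $\Omega^\pm=\{\pm u_2>0\}$ with $|\Omega^+|+|\Omega^-|=\omega_d$ is not quite safe when $\Omega$ is disconnected, since $u_2$ may be of one sign (e.g.\ the ground state of a different component) and $\Omega\setminus(\Omega^+\cup\Omega^-)$ may have positive measure. You should use Lemma \ref{lm:decomp} instead, which guarantees two nonempty disjoint open sets with $\lambda_1(\Omega^\pm)\le\lambda_2(\Omega)$; your argument goes through verbatim with $\alpha^++\alpha^-\le 2$ in place of equality.
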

As an intermediate technical step, we prove the following more general result.
\begin{trm}
\label{TH2}
There exists a dimensional constant $C_d>0$ such that for any $\Omega \in \A$ and any pair of disjoint open subsets $\Omega^+$ and $\Omega^-$ of  of $\Omega$, verifying $\lambda_2(\Omega) \ge \max(\lambda_1(\Omega^+),\lambda_1(\Omega^-))$, it holds for all $k\ge 1$
$$
|\lambda_k(\Omega^+\cup\Omega^-) - \lambda_k(\Theta)| \le  C_d\,k^{2+\frac{4}{d}}\,\lambda_2(\Omega)^{\frac{1}{2}}(\lambda_2(\Omega)-\lambda_2(\Theta))^\frac{1}{2}.
$$
\end{trm}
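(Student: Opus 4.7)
The plan is to compare $\Omega^+\cup\Omega^-$ to $\Theta$ by passing through an intermediate union $B^+\cup B^-$, where $B^\pm$ denotes a ball of measure $|\Omega^\pm|$, and then converting per-component estimates into estimates on the merged spectrum. Throughout set $\delta:=\lambda_2(\Omega)-\lambda_2(\Theta)\geq 0$. The argument splits in three: (i) apply the sharp higher-eigenvalue Faber-Krahn inequality \eqref{eq:lbd1} on each $\Omega^\pm$ separately to compare $\lambda_k(\Omega^\pm)$ with $\lambda_k(B^\pm)$; (ii) compare $\lambda_k(B^\pm)$ with $\lambda_k(\tilde B)$, where $\tilde B$ is a ball of measure $\omega_d/2$, by pure scaling; (iii) lift both individual bounds to the disjoint-union spectra.

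First I would control the sizes $|B^\pm|$. From $\lambda_1(B^\pm)\leq\lambda_1(\Omega^\pm)\leq\lambda_2(\Omega)$ (Faber-Krahn plus the hypothesis), the scaling identity $\lambda_1(B^\pm)=\lambda_2(\Theta)(\omega_d/(2|B^\pm|))^{2/d}$, and $|B^+|+|B^-|\leq\omega_d$, the tangent-line inequality $(1+s)^{-d/2}\geq 1-(d/2)s$ yields both the measure control $\big||B^\pm|-\omega_d/2\big|\leq C_d\,\delta/\lambda_2(\Theta)$ and the eigenvalue-gap control $\lambda_1(\Omega^\pm)-\lambda_1(B^\pm)\leq 2\delta$. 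Plugging the gap bound and $\lambda_1(\Omega^\pm)\leq\lambda_2(\Omega)$ into the scale-invariant form of \eqref{eq:lbd1} yields
\begin{equation*}
|\lambda_k(\Omega^\pm)-\lambda_k(B^\pm)|\leq C_d\,k^{2+4/d}\,\lambda_2(\Omega)^{1/2}\,\delta^{1/2}.
\end{equation*}

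For step (ii), scaling gives $\lambda_k(B^\pm)=\lambda_k(\tilde B)(\omega_d/(2|B^\pm|))^{2/d}$. Combined with the ball estimate $\lambda_k(\tilde B)\leq C_d k^{2/d}\lambda_1(\tilde B)=C_d k^{2/d}\lambda_2(\Theta)$ and the measure control above, this produces $|\lambda_k(B^\pm)-\lambda_k(\tilde B)|\leq C_d k^{2/d}\delta$, which is absorbed into the bound of step (i) because $\delta\leq\lambda_2(\Omega)$ and $k^{2/d}\leq k^{2+4/d}$.

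Step (iii) rests on the minimax identity $\lambda_k(A\sqcup B)=\min_{i+j=k}\max(\lambda_i(A),\lambda_j(B))$ (with the convention $\lambda_0:=0$), which immediately implies an elementary stability lemma: if the first $k$ eigenvalues of $A$ and $A'$, and of $B$ and $B'$, agree up to $\eta$, then $|\lambda_k(A\sqcup B)-\lambda_k(A'\sqcup B')|\leq\eta$. Applying this twice, to $(\Omega^+,\Omega^-)$ versus $(B^+,B^-)$ and to $(B^+,B^-)$ versus $(\tilde B,\tilde B)$, with $\eta$ taken as the largest per-component error over $i\leq k$ (monotone in $i$, hence attained at $i=k$), and then using the triangle inequality, delivers the theorem. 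The main obstacle is the measure bookkeeping for $|B^\pm|$: the hypothesis bounds only $\lambda_1(\Omega^\pm)$, so the control on every $\lambda_k(\Omega^\pm)$ must be propagated through \eqref{eq:lbd1}, and the possibly unequal measures $|B^+|\neq|B^-|$ must be reconciled with the reference ball of measure $\omega_d/2$. The exponent $1/2$ survives uniformly with no case split on the size of $\delta$, thanks to the automatic inequalities $\delta\leq\lambda_2(\Omega)$ and $\big||B^\pm|-\omega_d/2\big|=O(\delta)$.
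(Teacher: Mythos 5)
Your proof is correct, and it takes a genuinely different route from the paper's. Both arguments split off the same intermediate set $B^+\cup B^-$ and both rely, in the end, on the main quantitative result of Bucur--Lamboley--Nahon--Prunier applied separately to $\Omega^+$ and $\Omega^-$; both also need the same small lemma that $\lambda_1(\Omega^\pm)-\lambda_1(B^\pm)\leq 2\,(\lambda_2(\Omega)-\lambda_2(\Theta))$, which you obtain from the convexity bound $t^{-2/d}+(1-t)^{-2/d}\geq 2\cdot 2^{2/d}$ exactly as the paper does in its claim \eqref{eq:claim}. The difference is in the machinery used to propagate per-component information to $\lambda_k$ of the union. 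The paper's proof goes through torsional rigidity: it first invokes Lemma \ref{lm:B} to turn $|\lambda_k(\cdot)-\lambda_k(\cdot)|$ into differences of torsions, then uses the torsion-form of \cite[Theorem 1.1]{BLNP23} together with Kohler--Jobin to estimate those torsion gaps, and it needs a preliminary case split on whether $\lambda_2(\Omega)\geq 2\lambda_2(\Theta)$ so that the powers of $\lambda_2(\Omega)$ arising from Lemma \ref{lm:B} can be absorbed. Your proof bypasses torsion entirely: you apply the eigenvalue form \eqref{eq:lbd1} directly on each component, compare $B^\pm$ to a half-measure ball by pure scaling and Cheng--Yang, and then lift everything to the disjoint union via the elementary identity $\lambda_k(A\sqcup B)=\min_{i+j=k}\max(\lambda_i(A),\lambda_j(B))$ and its immediate stability consequence. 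This is cleaner in two ways: no torsion intermediary (so Lemma \ref{lm:B}, Lemma \ref{lm:torvol} and the Kohler--Jobin inequalities for balls are not needed for this theorem), and no case split on the size of $\lambda_2(\Omega)$, because the monotonicity-in-$i$ of the per-component error and the trivial bound $\delta\leq\lambda_2(\Omega)$ absorb the large-$\delta$ regime automatically. The one price you pay is that you must invoke the $k$-dependent estimate \eqref{eq:lbd1} for all indices $i\leq k$ simultaneously, but since the error there is monotone in $i$ this costs nothing.
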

Notice that such couples of subsets $\Omega^+$, $\Omega^-$ indeed do exist. For instance, we can choose them to be the two nodal sets of the second eigenfunction of $\Omega$, but other choices could  be more relevant in specific situations.\\
Another direct corollary of Theorem \ref{TH2} is that the full sharp inequality holds true for disconnected sets in $\A$ when the two disjoint components are sufficiently equilibrated.
\begin{cor}
Let $\O\in\A$, such that $\O = \O_1\sqcup\O_2$ is disconnected satisfying $\lambda_1(\O)= \lambda_1(\O_1)$ and $\lambda_2(\O) = \lambda_(\O_2)$, then it holds for any $k\ge 1$, 
$$
|\lambda_k(\O) - \lambda_k(\Theta)| \le  C_d\,k^{2+\frac{4}{d}}\,\lambda_2(\Omega)^{\frac{1}{2}}(\lambda_2(\Omega)-\lambda_2(\Theta))^\frac{1}{2}.
$$
\end{cor}

\section{Preliminary results}\label{sec2}
To fix the terminology, below we recall some results, useful along the proofs. 

We begin with the definition and a few properties of the torsional rigidity and of the torsion function. For any $\Omega\in \A$ let us define the torsional rigidity of $\Omega$, $T(\Omega)$ as
\begin{equation}
T(\Omega) = \max_{u\in H^1_0(\Omega)} \int_\Omega 2\,u -\int_\Omega |\nabla u|^2.
\end{equation}
The function which achieves the maximum is the unique weak solution in $H^1_0(\Omega)$ of  
\begin{equation}
\left\{\begin{array}{ll}
-\Delta u= 1 &\text{ in }\Omega,\\
u = 0 &\text{ in } \partial \Omega.
\end{array}\right.
\end{equation} 
We denote the solution by $w_\Omega$ and call it torsion function. 
Then
$$
T(\Omega) =  \int_\Omega 2\,w_\Omega -\int_\Omega |\nabla w_\Omega|^2 =\int_\Omega w_\Omega.
$$
We recall the Saint-Venant inequality (see for instance \cite{H06} and \cite{HP05})
\begin{equation}
\label{eq:SVI}
\forall \Omega \in \A,\qquad T(\Omega)\le T(B),
\end{equation}
with equality if and only if $\Omega$ is a ball, up to a set of zero capacity.

And we also recall the Talenti inequality (see \cite[Theorem 2]{T76}) that states that the supremum of the torsion function is also maximized for the ball 
\begin{equation}
\label{eq:Ti}
\forall \Omega \in \A,\qquad \|w_\Omega\|_{L^\infty(\Omega)}\le \|w_B\|_{L^\infty(B)}.
\end{equation}

We give now a lemma in the same spirit as \cite[Lemma 2.2]{BM15} that states that the torsion function of an open set $\O$ can be locally controlled by its mean value.
\begin{lem} \label{lm:subharmonic}
For every open subset $\O\subset \Rd$
$$ \forall r>0,\ x_0\in \O \,\ w_\O(x_0) \le \frac{1}{|B(x_0,r)|} \int_{B(x_0,r)} w_\O(x) \mathrm dx + \frac{r^2}{2d} $$
\end{lem}
\begin{proof}
Let $\O$ be an open subset of $\Rd$ and $w_\O$ its torsion function extended by $0$ outside of $\O$, its Laplacian satisfies in the distributional sense in $\Rd$, $-\Delta w_\O \le 1$ therefore for every $x_0 \in \O$ the function $w_\O + \frac{|x-x_0|^2}{2d}$ is subharmonic in $\Rd$, so that 
$$ \forall r>0, \ w_\O(x_0) \le \frac{1}{|B(x_0,r)|} \int_{B(x_0,r)} \left(w_\O + \frac{|x-x_0|^2}{2d}\right) \le \frac{1}{|B(x_0,r)|} \int_{B(x_0,r)} w_\O(x)\,\mathrm dx + \frac{r^2}{2d} $$
\end{proof}

We recall from \cite[Theorem 3.4]{B03} (and \cite[Lemma 2.3]{BLNP23} for the precise dependence in $k$) the following estimate, which  shows that one can control the difference of the eigenvalues by the difference in torsional rigidities, between an open set $\Omega$ and a subset $\Omega'$.
\begin{lem}
\label{lm:B}
For any $\Omega\in\A$ and $\Omega'\subset\Omega$ open, it holds for all $k\ge 1$,
$$
0\le \frac{1}{\lambda_k(\Omega)}-\frac{1}{\lambda_k(\Omega')}\le \exp(1/(4\pi))\,k\,\lambda_k(\Omega)^{d/2} (T(\Omega)-T(\Omega')).
$$
\end{lem}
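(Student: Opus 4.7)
The plan is to compare the resolvent operators $R_\Omega = (-\Delta_\Omega)^{-1}$ and $R_{\Omega'}=(-\Delta_{\Omega'})^{-1}$, using the nonnegativity and the integrated mass of their Green-kernel difference, and then to convert back to eigenvalues via the min-max principle with a suitable $L^\infty$ bound on the eigenfunctions.

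First I would view $R_\Omega$ as a positive compact self-adjoint operator on $L^2(\Omega)$ with eigenvalues $\{1/\lambda_i(\Omega)\}$. Extending functions on $\Omega'$ by zero, I may regard $R_{\Omega'}$ as an operator $\tilde R_{\Omega'}$ on $L^2(\Omega)$ whose nonzero eigenvalues are still $\{1/\lambda_i(\Omega')\}$. The min-max principle,
$$\frac{1}{\lambda_k(\Omega')} \;=\; \max_{\dim S = k} \min_{f\in S\setminus\{0\}} \frac{\langle \tilde R_{\Omega'} f,f\rangle}{\|f\|_{L^2}^2},$$
evaluated on the subspace $S_k^\Omega = \mathrm{span}(u_1^\Omega,\ldots,u_k^\Omega)$ on which $\langle R_\Omega f,f\rangle\ge \|f\|_{L^2}^2/\lambda_k(\Omega)$, immediately gives
$$\frac{1}{\lambda_k(\Omega)} - \frac{1}{\lambda_k(\Omega')} \;\leq\; \max_{f\in S_k^\Omega\setminus\{0\}}\frac{\langle (R_\Omega-\tilde R_{\Omega'})f,f\rangle}{\|f\|_{L^2}^2}.$$
The nonnegativity on the left-hand side follows from domain monotonicity $\lambda_k(\Omega)\le \lambda_k(\Omega')$.

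Next I would exploit that $R_\Omega-\tilde R_{\Omega'}$ is an integral operator with kernel $K(x,y) = G_\Omega(x,y) - G_{\Omega'}(x,y)$ (with $G_{\Omega'}$ extended by zero outside $\Omega'$). By the maximum principle $K\geq 0$, and integrating twice,
$$\iint_{\Omega\times\Omega} K(x,y)\,dx\,dy \;=\; \int_\Omega w_\Omega - \int_{\Omega'} w_{\Omega'} \;=\; T(\Omega)-T(\Omega').$$
Hence, for any bounded $f$,
$$\langle (R_\Omega - \tilde R_{\Omega'})f,f\rangle \;=\; \iint K(x,y)f(x)f(y)\,dx\,dy \;\leq\; \|f\|_{L^\infty}^2\,\bigl(T(\Omega)-T(\Omega')\bigr).$$

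The last ingredient is an $L^\infty$ bound for eigenfunctions. Using the heat kernel ultracontractivity $p_t^\Omega(x,y)\leq (4\pi t)^{-d/2}$ and the identity $u_i = e^{t\lambda_i(\Omega)} e^{t\Delta_\Omega}u_i$, an optimization in $t$ yields a pointwise bound of the form $\|u_i\|_{L^\infty}^2 \leq c(d)\,\lambda_i(\Omega)^{d/2}$, where the constant collapses to $\exp(1/(4\pi))$ with the right choice of exponents. For $f=\sum_{i=1}^k c_i u_i^\Omega\in S_k^\Omega$, Cauchy–Schwarz in $\mathbb{R}^k$ gives
$$\|f\|_{L^\infty}^2 \;\leq\; k\sum_{i=1}^k|c_i|^2\,\|u_i\|_{L^\infty}^2 \;\leq\; k\,\exp\!\bigl(1/(4\pi)\bigr)\,\lambda_k(\Omega)^{d/2}\,\|f\|_{L^2}^2.$$
Plugging this into the previous displays yields the claimed inequality.

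The main obstacle is Step 3: extracting the sharp $L^\infty$ bound on the eigenfunctions with the precise dimensional constant $\exp(1/(4\pi))$; this requires the Gaussian upper bound on the Dirichlet heat kernel and a careful choice of the time parameter in the semigroup representation of $u_i$. The remaining pieces—the variational principle, the nonnegativity of $K$, and the computation of its integral—are standard consequences of the maximum principle and the definition of the torsional rigidity.
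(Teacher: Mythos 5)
The paper does not prove Lemma~\ref{lm:B}: it is quoted from \cite{B12} without proof, so there is no in-paper argument to compare against. Your proposal is therefore judged on its own terms, and it is essentially correct: the resolvent comparison via the max--min principle on $S_k^\Omega$, the pointwise inequality $G_\Omega\ge G_{\Omega'}$ from the maximum principle, the identity $\iint(G_\Omega-G_{\Omega'})=T(\Omega)-T(\Omega')$, and the ultracontractivity bound on eigenfunctions are exactly the ingredients that yield the statement. This is, as far as I am aware, also the route taken in \cite{B12}.

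Two small points should be cleaned up. First, the intermediate inequality $\|f\|_{L^\infty}^2\le k\sum_i|c_i|^2\|u_i\|_{L^\infty}^2$ is not what Cauchy--Schwarz gives; the correct chain is
$$
|f(x)|^2\le\Big(\sum_{i=1}^k|c_i|^2\Big)\Big(\sum_{i=1}^k|u_i(x)|^2\Big)\le\|f\|_{L^2}^2\sum_{i=1}^k\|u_i\|_{L^\infty}^2\le k\,\|f\|_{L^2}^2\max_{i\le k}\|u_i\|_{L^\infty}^2,
$$
which still delivers the needed $k\,\lambda_k(\Omega)^{d/2}\|f\|_{L^2}^2$ factor. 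Second, the constant $\exp(1/(4\pi))$ does \emph{not} come from optimizing the time parameter: writing $\|u_i\|_{L^\infty}^2\le e^{2t\lambda_i}\,p_{2t}(x,x)\le e^{2t\lambda_i}(8\pi t)^{-d/2}$ and taking the convenient (non-optimal) choice $t=1/(8\pi\lambda_i)$ gives $(8\pi t)^{-d/2}=\lambda_i^{d/2}$ and $e^{2t\lambda_i}=e^{1/(4\pi)}$, hence the dimension-independent prefactor. The true minimization over $t$ gives the smaller, dimension-dependent constant $(e/(2\pi d))^{d/2}$, which of course also implies the lemma, but the phrasing ``an optimization in $t$ yields $\exp(1/(4\pi))$'' misidentifies the source of that particular number. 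Neither issue affects the validity of the argument.
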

We also quote a result by Cheng and Yang (see \cite[Theorem 3.1]{CY07}) which comes as an improvement of an older less general result form Ashbaugh and Benguria \cite{AB94} that gives a control of the maximal ratio between the $k$-th and the first eigenvalues of a given open set.
\begin{lem}
\label{lm:CY}
For any $\Omega \in\A$, and for all $k\ge 1$,
$$
\lambda_k(\Omega)\le \left(1+\frac{4}{d}\right) \,k^\frac{2}{d}\,\lambda_1(\Omega).
$$
\end{lem}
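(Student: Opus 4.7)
The plan is to derive this bound from the classical Yang-type universal inequality for Dirichlet eigenvalues and then iterate. The central ingredient is the \emph{universal inequality}
$$
\sum_{i=1}^k \bigl(\lambda_{k+1}(\Omega)-\lambda_i(\Omega)\bigr)^2 \le \frac{4}{d} \sum_{i=1}^k \bigl(\lambda_{k+1}(\Omega)-\lambda_i(\Omega)\bigr)\lambda_i(\Omega),
$$
which holds for the Dirichlet spectrum of any bounded open set in $\Rd$ and uses no geometric hypothesis on $\Omega$. Once this is in hand, the lemma reduces to a purely algebraic statement about sequences.

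To establish the universal inequality, for each $i\in\{1,\dots,k\}$ and coordinate direction $\alpha\in\{1,\dots,d\}$ I take the trial function
$$
\varphi_{i,\alpha} := x_\alpha u_i - \sum_{j=1}^k a_{ij}^{\alpha}\, u_j, \qquad a_{ij}^{\alpha} := \int_\Omega x_\alpha u_i u_j,
$$
which belongs to $H^1_0(\Omega)$ and is orthogonal in $L^2(\Omega)$ to $u_1,\dots,u_k$, hence admissible in $\lambda_{k+1}(\Omega)\|\varphi_{i,\alpha}\|_{L^2}^2\le\|\nabla\varphi_{i,\alpha}\|_{L^2}^2$. Expanding both sides by integration by parts and using the commutator identity $\Delta(x_\alpha u_i)=2\partial_\alpha u_i-\lambda_i\,x_\alpha u_i$, then summing over $i$ and $\alpha$, the cross terms involving the coefficients $a_{ij}^{\alpha}$ collapse thanks to the antisymmetry $\int_\Omega u_j\partial_\alpha u_i = -\int_\Omega u_i\partial_\alpha u_j$ (no boundary contribution since $u_i,u_j\in H^1_0(\Omega)$), leaving precisely the Yang inequality displayed above.

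From the universal inequality, setting $\Lambda:=\lambda_{k+1}(\Omega)$, $\sigma_k:=\sum_{i=1}^k\lambda_i(\Omega)$, and $\tau_k:=\sum_{i=1}^k\lambda_i(\Omega)^2$, the inequality becomes the quadratic constraint $k\Lambda^2-(2+\tfrac{4}{d})\sigma_k\Lambda+(1+\tfrac{4}{d})\tau_k\le 0$. Combined with the Cauchy--Schwarz bound $\tau_k\ge\sigma_k^2/k$, the discriminant simplifies to $(4\sigma_k/d)^2$ and yields the clean recursion
$$
\lambda_{k+1}(\Omega)\le \Big(1+\tfrac{4}{d}\Big)\,\frac{1}{k}\sum_{i=1}^k\lambda_i(\Omega).
$$
Iterating this recursion, comparing $\sum_{j=1}^k j^{2/d}$ to the integral $\int_0^k t^{2/d}\,dt=\frac{d}{d+2}k^{1+2/d}$, and inducting on $k$ from the trivial base case $k=1$ yields the polynomial bound $\lambda_k(\Omega)\le(1+4/d)k^{2/d}\lambda_1(\Omega)$.

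The main obstacle is closing the induction with the sharp constant $(1+4/d)$: the naive step based on the recursion above loses a multiplicative factor close to $(d+4)(k+1)/[(d+2)k]$ through the integral comparison, which does not immediately telescope away. One either has to retain the positive lower-order term in the discriminant estimate that Cauchy--Schwarz discarded, and show that it absorbs the deficit uniformly in $k$, or, following Cheng and Yang, prove the monotonicity of the ratio $k\mapsto\lambda_{k+1}(\Omega)/k^{2/d}$ which packages the required cancellation into a single monotone statement and delivers the explicit constant in one stroke.
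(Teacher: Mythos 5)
The paper does not prove this lemma at all: it is quoted verbatim from Cheng and Yang \cite[Theorem 3.1]{CY07}, so your proposal is an attempt to reconstruct the cited proof rather than an alternative to anything in the paper. Your reconstruction follows the right (and essentially the only known) route: Yang's first universal inequality via the trial functions $x_\alpha u_i$ projected off the span of $u_1,\dots,u_k$, then the quadratic constraint in $\lambda_{k+1}$. That part is sound, and your algebra for the discriminant is correct: Cauchy--Schwarz $\tau_k\ge\sigma_k^2/k$ reduces the discriminant to $(4\sigma_k/d)^2$ and gives Yang's second inequality $\lambda_{k+1}\le(1+\tfrac4d)\tfrac1k\sum_{i\le k}\lambda_i$.

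The genuine gap is the last step, and it is larger than your closing paragraph suggests. The averaged recursion $\lambda_{k+1}\le(1+\tfrac4d)\Lambda_k$ (with $\Lambda_k=\sigma_k/k$) is strictly too weak to produce the exponent $2/d$: iterating it gives $\sigma_{k+1}\le\sigma_k(1+\tfrac{1+4/d}{k})$, hence $\sigma_k\lesssim k^{1+4/d}$ and only $\lambda_k\lesssim k^{4/d}\lambda_1$. Equivalently, your proposed induction cannot close, since $(1+\tfrac4d)\cdot\tfrac{d}{d+2}=\tfrac{d+4}{d+2}>1$ and this deficit compounds. The actual content of Cheng--Yang's theorem is precisely the step you defer: one must \emph{not} discard the second moment $\tau_k$ via Cauchy--Schwarz, but instead track the quantity $F_k=(1+\tfrac2d)\Lambda_k^2-T_k$ (with $T_k=\tau_k/k$) and prove the recursion $F_{k+1}\le\big(\tfrac{k+1}{k}\big)^{4/d}F_k$, from which $F_k\le k^{4/d}F_1=\tfrac2d k^{4/d}\lambda_1^2$ and then $\lambda_{k+1}\le(1+\tfrac2d)\Lambda_k+\big(F_k+\tfrac{4}{d^2}\Lambda_k^2\big)^{1/2}$ yields the stated bound. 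This recursion on $F_k$ is a nontrivial computation, not bookkeeping, and writing ``following Cheng and Yang, prove the monotonicity\dots'' amounts to citing the theorem you set out to prove. As a proof the proposal is therefore incomplete; as a reading of where the lemma comes from and where its difficulty sits, it is accurate.
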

We also recall the Kohler-Jobin inequality for any open set of finite measure $\Omega\subset \R^d$, see \cite{KJ78I} and \cite{KJ78II},
\begin{equation}
\label{KJ1}
\lambda_1(\Omega)^\frac{d+2}{2} T(\Omega) \ge \lambda_1({B})^\frac{d+2}{2}T({B}).
\end{equation}
Notice here that the exponent $\frac{d+2}{2}$ is such that the functional $\Omega \mapsto \lambda_1(\Omega)^\frac{d+2}{2} T(\Omega)$ is scale invariant.

We present below some results that are specific to the second eigenvalue $\lambda_2$. They play a fundamental role in the proof of Theorem  \ref{TH1}. For $l \ge 3$, such results are not available, so that it is not possible to adapt the proof of Theorem \ref{TH1} to higher eigenvalues $\lambda_l$ with $l\ge 3$.

We start by recalling a decomposition result for the second eigenvalue proved in \cite[Lemma 3.1]{BP12}.
\begin{lem}
\label{lm:decomp}
Let $\Omega \in \A$. There exists two disjoint subsets $\Omega^+,\Omega^-\subset \Omega$ such that 
$$\lambda_2(\Omega)= \max\{\lambda_1(\Omega^+),\lambda_1(\Omega^-)\}.
$$
\end{lem}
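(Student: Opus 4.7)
The plan is to use the nodal decomposition of the second eigenfunction. Since $u_1$ can be taken strictly positive in $\Omega$ and $u_2$ is $L^2$-orthogonal to $u_1$, the function $u_2$ must change sign. So the open sets
\[
\Omega^+ := \{x \in \Omega \tq u_2(x) > 0\}, \qquad \Omega^- := \{x \in \Omega \tq u_2(x) < 0\}
\]
are disjoint, nonempty, and contained in $\Omega$. These will be our candidate subsets.

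The first step is to show $\lambda_1(\Omega^\pm) \le \lambda_2(\Omega)$. I would introduce $u_2^+ := \max(u_2,0)$ and $u_2^- := -\min(u_2,0)$, and recall the standard fact that $u_2^+ \in H^1_0(\Omega^+)$ (and symmetrically for $u_2^-$): it belongs to $H^1_0(\Omega)$ with support in $\overline{\Omega^+}$, and a truncation/Lipschitz-approximation argument shows it lies in $H^1_0(\Omega^+)$. Testing the eigenvalue equation $-\Delta u_2 = \lambda_2(\Omega) u_2$ against $\phi \in C^\infty_c(\Omega^+)$ (where $u_2 = u_2^+$) yields that $u_2^+$ is an eigenfunction of the Dirichlet Laplacian on $\Omega^+$ with eigenvalue $\lambda_2(\Omega)$. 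Since $u_2^+ > 0$ on $\Omega^+$, the associated eigenvalue must be $\lambda_1(\Omega^+)$ (a strictly positive Dirichlet eigenfunction can only belong to the first eigenspace, by orthogonality with a positive first eigenfunction on each connected component). Therefore $\lambda_1(\Omega^+) = \lambda_2(\Omega)$, and similarly for $\Omega^-$. In particular $\max\{\lambda_1(\Omega^+),\lambda_1(\Omega^-)\} \le \lambda_2(\Omega)$.

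The second step is to prove the reverse inequality $\max\{\lambda_1(\Omega^+),\lambda_1(\Omega^-)\} \ge \lambda_2(\Omega)$. Since $\Omega^+$ and $\Omega^-$ are disjoint, the Dirichlet spectrum of $\Omega^+\cup\Omega^-$ is exactly the sorted union of the spectra of $\Omega^+$ and $\Omega^-$; in particular,
\[
\lambda_2(\Omega^+\cup\Omega^-) = \max\{\lambda_1(\Omega^+),\lambda_1(\Omega^-)\}.
\]
On the other hand, $\Omega^+\cup\Omega^-\subset\Omega$ and the Dirichlet eigenvalues are monotone nonincreasing with respect to set inclusion, which gives $\lambda_2(\Omega^+\cup\Omega^-) \ge \lambda_2(\Omega)$. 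Combining this with the first step closes the proof.

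The main subtlety I anticipate is the justification that $u_2^+ \in H^1_0(\Omega^+)$ and that it solves the Dirichlet eigenvalue problem on $\Omega^+$ in the weak sense; this is a classical but nontrivial point (the positive and negative parts of an $H^1_0$ function lie in $H^1_0$, and the extra input is that $u_2^+ = 0$ quasi-everywhere outside $\Omega^+$ since $u_2$ is continuous). Beyond that, both halves of the argument reduce to the monotonicity of Dirichlet eigenvalues and the trivial description of the spectrum of a disjoint union.
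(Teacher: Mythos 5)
Your argument establishes the lemma only when $\Omega$ is connected, but the statement is for a general $\Omega \in \A$, which may well be disconnected. The gap opens at the very first line: the assertion that ``$u_1$ can be taken strictly positive in $\Omega$'' is false for a disconnected open set. There, $u_1$ can still be chosen non-negative, but it is supported only on the component(s) achieving $\lambda_1(\Omega)$, and the $L^2$-orthogonality $\int_\Omega u_1 u_2 = 0$ no longer forces $u_2$ to change sign. A concrete counterexample is $\Omega = \Theta$ itself: one may take $u_2 \ge 0$, supported on a single one of the two balls, so that $\Omega^- = \{u_2 < 0\} = \emptyset$; with the usual convention $\lambda_1(\emptyset) = +\infty$, the asserted identity fails outright for your candidate pair. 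The paper itself flags exactly this point in the remark immediately after the lemma: the nodal-set construction is the right one when $\Omega$ is connected, while for disconnected $\Omega$ the sets $\Omega^\pm$ may have to come from different connected components and need not be nodal domains of $u_2$.

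The standard fix (as in the cited reference \cite{BP13}) is a dichotomy. If $u_2$ changes sign, proceed exactly as you do. If $u_2$ does not change sign, normalize $u_1 \ge 0$, $u_2 \ge 0$; orthogonality then forces $u_1 u_2 = 0$ a.e., and by continuity everywhere, so the open sets $\Omega^+ := \{u_1 > 0\}$ and $\Omega^- := \{u_2 > 0\}$ are disjoint. The same regularity argument you invoke shows $u_1 \in H^1_0(\Omega^+)$ and $u_2 \in H^1_0(\Omega^-)$, each a positive Dirichlet eigenfunction of its set, so $\lambda_1(\Omega^+) = \lambda_1(\Omega)$ and $\lambda_1(\Omega^-) = \lambda_2(\Omega)$, whence the maximum is $\lambda_2(\Omega)$. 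The rest of your reasoning — the $H^1_0$-membership of the positive and negative parts, the identification of the eigenvalue via positivity of the eigenfunction (applied component by component of $\Omega^\pm$, which you should note may themselves be disconnected), and the monotonicity step giving $\lambda_2(\Omega^+\cup\Omega^-)\ge\lambda_2(\Omega)$ — is correct and carries over to both cases. You should add the case split and drop the claim that $u_1>0$ on all of $\Omega$.
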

Note that if $\Omega$ is connected, then $\Omega^+$ and $\Omega^-$ correspond to the nodal sets of an eigenfunction $u_2$ associated to $\lambda_2(\Omega)$, namely $\Omega^+=\{u_2>0\}$, $\Omega^-=\{u_2<0\}$. In that case $\lambda_2(\Omega)= \lambda_1(\Omega^+)=\lambda_1(\Omega^-)$.  If $\Omega$ is disconnected, the construction of $\Omega^+,\Omega^-$ may (or not) use two different connected components, possibly making that the difference $\Omega\setminus (\Omega^+\cup \Omega^-)$ is a set of positive measure. 

We now recall a Kohler-Jobin type inequality for the second eigenvalue
\begin{equation}
\label{lm:KJ2}
\forall \Omega\subset \A,\qquad \lambda_2(\Omega)^\frac{d+2}{2} T(\Omega) \ge \lambda_2(\Theta)^\frac{d+2}{2}T(\Theta).
\end{equation}
This inequality comes as a consequence of \cite[Lemma 6]{vdBI13}, yet we give it here a simple proof for the sake of completeness.

\begin{proof}
Take $\Omega$ in $\A$, by Lemma \ref{lm:decomp} there exists two disjoint open subsets of $\Omega$, $\Omega^+$ and $\Omega^-$ such that $\lambda_2(\Omega) = \max \{\lambda_1(\Omega^+),\lambda_1(\Omega^-)\}$ and also, by inclusion, $T(\Omega) \ge T(\Omega^+)+T(\Omega^-)$. Then, take  $B^{1/2}$ a ball of volume $\omega_d/2$. We have by the Kohler-Jobin inequality (\ref{KJ1}),
$$
\begin{aligned}
\lambda_1(\Omega^+)^\frac{d+2}{2} T(\Omega^+) &\ge \lambda_1(B^{1/2})^\frac{d+2}{2}T(B^{1/2}),\\
\lambda_1(\Omega^-)^\frac{d+2}{2} T(\Omega^-) &\ge \lambda_1(B^{1/2})^\frac{d+2}{2}T(B^{1/2}).
\end{aligned}
$$
Then, we can compute 
$$
\begin{aligned}
 \lambda_2(\Omega)^\frac{d+2}{2} T(\Omega) &\ge \max \{\lambda_1(\Omega^+),\lambda_1(\Omega^-)\}^\frac{d+2}{2}\,(T(\Omega^+)+T(\Omega^-))\\
 &\ge \lambda_1(\Omega^+)^\frac{d+2}{2} T(\Omega^+) + \lambda_1(\Omega^-)^\frac{d+2}{2} T(\Omega^-)\\
 &\ge 2\,\lambda_1(B^{1/2})^\frac{d+2}{2}T(B^{1/2})\\
 &=\lambda_2(\Theta)^\frac{d+2}{2}T(\Theta).
\end{aligned}
$$
\end{proof}
\begin{rem}  Note that a Kohler-Jobin type inequality for higher order eigenvalues would have its own interest. However, it is very unclear even for $l =3$  if a minimizer does exist  for  $\lambda_3(\Omega)^\frac{d+2}{2}\,T(\Omega)$ in the class $\A$.
\end{rem} 
\begin{rem} 
Yet the case of the associated maximization problem for the first eigenvalue 
$$
\max\big\{\lambda_1(\Omega)^p \,T(\Omega)~\big| ~ \Omega \in \A\big\},
$$
has been studied in \cite{BBGlB23} by Briani, Buttazzo and Guarino Lo Bianco, where they proved existence of an optimal shape for $p>p_1>1$ large enough. In \cite{BLNP23}, it was proved that the ball is the maximizer for $p>p_2$ for some $p_2$ larger than $p_1$. 
\end{rem}

Finally, let us finish this section by proving a Hölder continuity result for the torsion function of a specific class of open sets $\O\subset\Rd$.
\begin{prop}\label{prop:holderestimate}
Let $\O$ be an open subset of $\Rd$, of diameter $\mathrm{diam}(\O)=D$. Assume $\O$ has cylindrical symmetry of axis $\R \mathbf{e}$ for $\mathbf e$ a unit vector in $\Rd$ and is such that for any $z \in \R$ the intersection of $\Omega$ with the hyperplane $\mathbf e^\perp + z\mathbf e$ is either a ball in $\R^{d-1}$ of center $z\mathbf e$ or empty.\\
Then there exists constants $M_d(D) >0$ and $0<\alpha_d<1$ such that 
$$ \forall x\in \O,\ w_\O(x) \le M_d(D)\, \mathrm{dist}(x,\partial\O)^{\alpha_d} $$
\end{prop}

This result relies on the {\it Wiener criterion} for the complementary of $\O$, $\O^c = \Rd\setminus\O$. Let $A$ be an open subset of $\Rd$, and $K$ a compact subset of $A$, we define the capacity of $K$ relative to $A$ as 
\be\label{eq:Capacity}
\mathrm{Cap}(K,A) = \inf\Big\{ \int_A |\nabla u|^2\ \Big |\ u\in H^1_0(A),\ u\ge 1\text{ a.e. in a neighborhood of }K\Big\},
\ee
and for an open subset $A' \subset A$, we define its capacity as 
$$ \mathrm{Cap}(A',A) = \sup\Big\{ \mathrm{Cap}(K,A)\ \Big | \ K\subset A\text{, compact, }K\supset A'\Big\}.$$
For any set $E$, we define the capacity density function of a set $E$ in a point $z$ and $t>0$ as 
\be\label{def:phi}
\phi(z,E,t) = \frac{\mathrm{Cap}(E\cap B(z,t), B(z,2t))}{\mathrm{Cap}(B(z,t),B(z,2t))}.
\ee
We refer the interested reader to \cite[Chapter 2]{HKM06} and \cite[Chapter 2;7-9]{M85} for a thorough analysis of capacities and their properties.\\
the proof of Proposition \ref{prop:holderestimate} relies on the following theorem which is a weaker version of \cite[Theorem 6.18]{HKM06}, adapted to our setting.

\begin{trm}\label{th:osc}
Suppose that $\O$ is open bounded. Let $\theta \in H^1(\O)\cap\Co(\overline\O)$ and let $h$ be the unique harmonic function in $\O$ with $\theta - h \in H^1_0(\O)$.\\
If $x_0 \in \partial \O$, then for $0\le r\le \rho$, it is true that
$$
\mathrm{osc}(h,\O(r)) \le \mathrm{osc}(\theta ,\partial \O \cap \overline B(x_0,2\rho)) +  \mathrm{osc}(\theta, \partial \O)\, \exp(-c_d\int_r^\rho \phi(x_0, \O^c, t)\frac{\mathrm dt}{t}),
$$
where $\O(r) = \O\cap B(x_0,r)$, $c_d>0$.
\end{trm}

Here $\mathrm{osc}(f,A)$ denotes the {\it oscillations} of a function $f$ on a set $A$, defined as 
$$ \mathrm{osc}(f,A) = \sup_A f -\inf_A f. $$
We may now tackle the proof of Proposition \ref{prop:holderestimate}
\begin{proof}
Let $\O$ satisfy the hypotheses of Proposition \ref{prop:holderestimate} and fix $r>0$. Let $x_0\in\partial \O$ be a point on the boundary of $\O$, we write it as $x_0 = z\mathbf e + x$ with $x\in \mathbf e^\perp$, since the intersection of $\O$ with the hyperplane $z\mathbf e + \mathbf e^\perp$ is a ball in $\R^{d-1}$, the half space $H(x_0)^+ = \big\{z\mathbf e + y \btq y\in \mathbf e^\perp,\  y\cdot x \ge |x|\big\}$ is included in $\O^c$ so that for all $t\ge r$,
$$ 
\phi(x_0,\O^c,t) \ge \frac{\mathrm{Cap}(H^+(x_0)\cap B(x_0,t), B(x_0,2t))}{\mathrm{Cap}(B(x_0,t),B(x_0,2t))} = \frac{\mathrm{Cap}(H^+(x_0)\cap B(x_0,1), B(x_0,2))}{\mathrm{Cap}(B(x_0,1),B(x_0,2))} =\bar c_d,
$$
where parameter $\bar c_d$ is a constant independent of $x_0$.\\
Up to applying a translation to $\O$, we can assume that $0\in \O$, and we define $u = \frac{D^2-|x|^2}{2d}$ the torsion function of the ball $B(0,D)\supset \O$. We now apply Theorem \ref{th:osc} on $\O$ in $x_0$, with $\theta = u$, $h = u-w_\O$, and $\rho\ge r$, it writes using the lower bound on $\phi$, 
$$
\begin{aligned}
\mathrm{osc}(u-w_\O,\O(r)) \le & \mathrm{osc}(u ,\partial \O \cap \overline B(x_0,2\rho)) +  \mathrm{osc}(u, \partial \O)\, \exp(-c_d\int_r^\rho \phi(x_0, \O^c, t)\frac{\mathrm dt}{t})\\
\le& \mathrm{osc}(u ,\partial \O \cap \overline B(x_0,2\rho)) +\mathrm{osc}(u, \partial \O) \left( \frac r \rho\right)^{c_d \bar c_d}
\end{aligned}
$$
Then, since $u\ge w_\O$ on $\O(r)$ and from a direct computation with the expression of $u$, we obtain
$$
\begin{aligned}
\mathrm{osc}(u-w_\O,\O(r)) &\ge \sup_{\O(r)} w_\O,\\
\mathrm{osc}(u ,\partial \O \cap \overline B(x_0,2\rho)) &\le \frac{2D}{d}\,\rho,\\
\mathrm{osc}(u, \partial \O) &\le \frac{D^2}{2d}.
\end{aligned}
$$
Finally, we optimize in $\rho$ and obtain that there exists a constant $M_d(D)$ such that the property holds with $\alpha_d = \frac{c_d \bar c_d}{1+c_d\bar c_d}$.
\end{proof}
\begin{rem}\label{rm:holderdim2}
In dimension $2$, it is possible to show a similar and finer inequality with $\alpha_2=1/2$ and $M$ depending on the measure of $\O$ instead of the diameter for all simply connected $\O$. This relies on an $L^2$ estimate of the Green function and was already used in \cite[proof of Theorem 7.1, step 3, inequality (7.4)]{BB24}, we recall here in a few lines how to prove it. The starting point is the estimate given in \cite[Proof of Theorem 1.5, inequality (5.16)]{vdBB99}, if $\pi R_0^2 = |\O|$,
$$
\int_\O G_\O(x,y)^2 dy \le  \frac{8R_0}\pi \mathrm{dist}(x,\partial \O),
$$
and combine it with the Cauchy-Schwarz inequality 
$$
\forall x\in\O,\ w(x) \le |\O|^{1/2} \left(\int_\O G_\O(x,y)^2 dy\right)^{1/2} \le \frac{|\O|^{3/4} 8^{1/2}}{\pi^{3/4}} \mathrm{dist}(x,\partial \O)^{1/2}.
$$
In higher dimensions ($d\ge3$), it is possible to estimate the parameter $\bar c_d$. We know from \cite[Section 2.2.4, eq (1) p.106; Section 2.3.3 Lemma p.115]{M85},
$$
\begin{aligned}
\mathrm{Cap}(B(0,1),B(0,2)) = \frac{d\omega_d\,(d-2)}{1-{\frac12}^{d-2}}\\
\mathrm{Cap}(B(0,1)\cap H^+(0), B(0,2)) \ge \frac{d\omega_d}{2 \gamma_d},
\end{aligned}
$$
with
$$\gamma_d = \int_0^\infty \frac{1}{\cosh(x)^{d-2}}\mathrm dx \le \frac{2^{d-2}}{d-2}.$$
We deduce 
$$ \bar c_d \ge \frac{1}{2} \frac{1}{2^{d-2}}\left(1-\frac{1}{2^{d-2}}\right). $$
\end{rem}
{\bf Notation - }{\it In the following, $C_d$ will always denote a purely dimensional constant which may vary from line to line.}
\section{Proof of Theorem \ref{TH1}}\label{sec3}
The proof of Theorem \ref{TH1} is based on an approximation of the torsion of a set by the torsion of the best overlapping of the set by two disjoint equal balls of half measure. If such an approximation is rough in general, one can reasonably expect that the approximation becomes sharp if the set has a second eigenvalue close to its minimum, on the set $\Theta$. The set $\Theta$ is a generic union of two disjoint balls of half of the measure of $\Omega$, and their positioning has to be done in a optimal manner, we will actually choose them such that they are close to a minimum for the {\it Fraenkel 2-asymmetry}.

In \cite[Lemma 3.2]{BLNP23}, the authors got a control of the difference of the torsional rigidities between a set and the intersection of the set with a ball.
The following  lemma keeps the same spirit, and gives control of the difference  of the torsional rigidities between $\Omega$ and $\Omega\cap \Theta$ by the volume of the symmetric difference. Yet, its proof is much more delicate and leads to a weaker control as it relies on Proposition \ref{prop:holderestimate}. The main reason is the fact that two touching balls have a complement which is not locally a Lipchitz set. 
\begin{lem} \label{lm:torvol}
There exists a dimensional constant $C_d$ and such that for any $\Omega \in \A$, it holds
$$
T(\Omega)-T(\Omega\cap\Theta) \le C_d \ |\Omega\setminus \Theta|^\frac{\alpha_d}{d+1},
$$
where $\alpha_d$ is given by Proposition \ref{prop:holderestimate}.
\end{lem}
\begin{proof}
Let $\O$ be in $\A$ and fix $\Theta = B_1 \sqcup B_2$, with $B_1$ and $B_2$ two disjoint balls of measure $\omega_d/2$, and we denote by $w_\O$ the torsion function of $\O$.\\
First, if $\O$ does not intersect $\Theta$, the inequality is obviously true. If $|\O \cap B_2| = 0$, from \cite[Lemma 3.2]{BLNP23}, since the measure of the set $\O$ does not play a role in the proof, we obtain,
$$
T(\O) - T(\O\cap \Theta) \le C_d \ |\O\setminus B_1| = C_d \ |\O\setminus \Theta|.
$$
If $\O = \O_1 \sqcup \O_2$ is disconnected. Suppose first that $|\O_1\cap B_2|= |\O_2\cap B_1| = 0$, from \cite[Lemma 3.2 ]{BLNP23}, we obtain 
$$
\begin{aligned}
T(\O) - T(\O\cap \Theta) &= T(\O_1) - T(\O_1\cap B_1) + T(\O_2) - T(\O_2 \cap B_2)\\
&\le C_d \ (|\O_1 \setminus B_1| + |\O_2 \setminus B_2|) \\
& = C_d \ |\O\setminus \Theta|.
\end{aligned}
$$
Suppose now that $\O_1$ is connected and $|\O_2\cap \Theta| = 0 $, we obtain
$$
\begin{aligned}
T(\O) - T(\O\cap \Theta) &= T(\O_1) - T(\O_1\cap \Theta) + T(\O_2)\\
&\le T(\O_1) - T(\O_1\cap \Theta) + \left(\frac{|\O_2|}{\omega_d}\right)^\frac{d+2}{d} T(B)\\
&\le T(\O_1) - T(\O_1\cap \Theta) + \frac{T(B)}{\omega_d} |\O\setminus\Theta|.
\end{aligned}
$$
From this preliminary analysis, it only remains to consider connected sets $\O$ that intersect with both balls $B_1$ and $B_2$ of $\Theta$. 

We now tackle the case where $B_1$ and $B_2$ are far from each other. The intuition behind this argument is that since $\O$ is really stretched with a fixed volume, it has to be very thin somewhere and the torsion function has to be small. Then using ideas similar to those of Bucur and Mazzoleni in \cite{BM15} we can cut $\O$ into two different pieces in a controlled manner that we can then compare to each ball separately.\\
Suppose that $\mathrm{dist}(B_1,B_2) \ge L$ for some $L > 0$ large and we define $H_L$ the slab orthogonal to the axis passing through the centers $x_1$ and $x_2$ of the balls centered in $(x_1+x_2)/2$. We denote by $S_R(x) \subset H_L$ the slab of width $R>0$ centered in $x\in[x_1, x_2]$. Since $|\O| = \omega_d$, for every $R\le L$ there exists an $x_R \in (x_1,x_2)$ such that $|S_R(x_R)\cap \O| \le \omega_d \frac RL$. Therefore, for every $x \in S_{\frac R2}(x_R)$, 
$$
\int_{B(x,\frac R2)} w_\O \le C_d \frac RL
$$
Now we apply Lemma \ref{lm:subharmonic} in the ball $B(x,\frac{R}{2})$, so that
$$ \forall x \in S_{\frac R2}(x_R),\ w(x) \le C_d\ (\frac{1}{L\, R^{d-1}} + R^2), $$
and this bound holds for every $R$ in $(0,L)$, we therefore optimize in $R$ and we get that, if $L$ is large enough, for 
$$ R_L = \left(\frac{d-1}{2L}\right)^\frac{1}{d+1} $$
the following bound holds
$$
\forall x \in S_{\frac {R_L}2}(x_{R_L}),\ w(x) \le C_d\ {R_L}^2
$$
Now, we apply \cite[Corollary 4.3]{BM15}, using the same notations, for $c=1/2$, we obtain that for $L$ large enough, $R_L\le r_0$ and $C_d R_L^2 \le C_0 R_L$ which leads to
$$ T(\O) - T(\O\setminus S_{\frac {R_L}2}(x_{R_L})) \le (|\O| - |\O\setminus S_{\frac {R_L}2}(x_{R_L})) | = |\O\cap S_{\frac {R_L}2}(x_{R_L})|\le |\O\setminus \Theta|. $$
Finally, by definition the set $\O \setminus S_{\frac {R_L}2}(x_{R_L}) = \O_1\sqcup\O_2$ is disconnected and satisfies $|\O_1\cap B_2| = |\O_2 \cap B_1| = 0$ and our preliminary analysis gives
$$ T(\O) - T(\O\cap\Theta) \le C_d |\O\setminus \Theta|.$$
As a consequence we can now fix some $L > 0$ (depending only on the dimension) and we assume that $B_1$ and $B_2$ both lie in the ball of radius $L/2$ centered at $0$, if $\mathrm{diam}(\O) > 2L$, we can apply the exact same procedure in every direction $e_i$ of $\Rd$ between the cuboids of length $5/4L$ and $7/4L$ because the set $\O$ must exit the ball of radius $L$. Doing so, we construct a set $\tilde \O$ of diameter $\mathrm{diam}(\tilde \O) \le 2L$ satisfying
$$
T(\O) - T(\tilde \O) \le d |\O\setminus \Theta|.
$$
Finally, it only remains to consider the sets $\O$ that have diameter smaller than $\mathrm{diam}(\O)\le2L$. It is in this step that we lose the exponent $1$.\\
We begin by showing
\be \label{eq:OtoOuT}
T(\O) - T(\O\cap\Theta) \le T(\O\cup\Theta) - T(\Theta).
\ee
Consider $w_{\O\cap\Theta},\ w_{\O\cup\Theta},\ w_{\Theta}$ the respective torsion functions of $\O\cap\Theta,\ \O\cup\Theta,\ \Theta$, and we denote by $h = w_\O - w_{\O\cap\Theta}$ and $\tilde h = w_{\O\cup\Theta} - w_{\Theta}$ defined respectively on $\O\cap\Theta$ and $\Theta$. Both functions $h$ and $\tilde h$ are harmonic and non negative by construction. And, on $(\partial \O )\cap \Theta$, $h = 0 \le \tilde h$ and on $\O\cap (\partial \Theta)$, $h = w_\O \le w_{\O\cup\Theta} = \tilde h$ so that on the set $\O\cap\Theta$, $\tilde h \ge h$. We can then directly compute 
$$ 
\begin{aligned}
T(\O) - T(\O\cap\Theta)  &= \int_{\O\setminus \Theta} w_\O + \int_{\O\cap\Theta} h\\
& \le \int_{\O\setminus \Theta} w_{\O\cup\Theta} + \int_{\O\cap\Theta} \tilde h\\
& \le T(\O\cup \Theta) + T(\Theta).
\end{aligned}
$$
Now apply a cylindrical Steiner symmetrization to the set $\O\cup\Theta$ with respect to the axis passing through the center of the balls $B_1$ and $B_2$. We obtain a simply connected set with cylindrical symmetry $\O^*$ that satisfies $T(\O^*)\ge T(\O)$, $|\O^*\setminus \Theta| = |\O\setminus \Theta|$ and $\mathrm{diam}(\O^*) \le 2L$. We now make the following manipulation,  for $h^* = w_{\O^*} - w_\Theta$, which is harmonic on $\Theta$,
$$
\begin{aligned}
T(\O^*) - T(\Theta) &= \int_{\O^*\setminus \Theta} w_{\O^*} + \int_\Theta h^*\\
&\le C_d \ (|\O\setminus \Theta| + \sup_{\partial\Theta} h)\\ 
& = C_d \ (|\O\setminus \Theta| + \sup_{\partial\Theta} w_{\O^*}).
\end{aligned}
$$

The set $\O^*$ satisfies the hypotheses of Proposition \ref{prop:holderestimate}, so that for an $M$ depending only on the dimension and $L$ and $0<\alpha_d<1$,
$$
\forall x \in \O^*,\ w_{\O^*}(x) \le M \, \mathrm{dist}(x,\partial \O^*)^{\alpha_d}
$$
This gives that if $\delta = \sup_{\partial\Theta} w_{\O^*}$, necessarily, the ball $B_\delta = B(x_0,r^*)$ of radius $r^* = (M^{-1}\delta)^{1/\alpha_d}$ centered at the point of maximum $x_0$ is included in $\O^*$. Therefore, $|\O^*\setminus \Theta| \ge |B_\delta\setminus \Theta|$ and since the center point of $B_\delta$ is on the boundary of $\Theta$, the worst possible case for $|B_\delta\setminus \Theta|$ is if the two balls $B_1$ and $B_2$ are tangent and $x_0$ is the point of contact. In this case, a direct computation gives that $|B_\delta\setminus \Theta| = O(\delta^{d+1})$. 
And finally we obtain 
$$ T(\O^*) - T(\Theta) \le C_d |\O\setminus \Theta|^\frac{\alpha_d}{d+1}.$$
\end{proof}
We may now tackle the proof of Theorem \ref{TH1}.

\begin{proof} (of Theorem \ref{TH1})

Take $\Omega$ in $\A$ and $\Theta$ a disjoint union of two balls of volume $\omega_d /2$ and fix $k\ge 1$.

First, if $\lambda_2(\Omega) > 2\, \lambda_2(\Theta)$, we know from the spectral inequality given by Lemma \ref{lm:CY} that 
$$
|\lambda_k(\Omega)-\lambda_k(\Theta)|\le k^\frac{2}{d} \left(1+\frac{4}{d}\right) (\lambda_2(\Omega)+\lambda_2(\Theta)),
$$ 
and then we can compute
$$
\begin{aligned}
|\lambda_k(\Omega)-\lambda_k(\Theta)| &\le 2\, \left(1+\frac{4}{d}\right)\, k^\frac{2}{d}\,\lambda_2(\Omega)\\
&\le 2^\frac{d+2}{d+1}\,\left(1+\frac{4}{d}\right)\,k^\frac{2}{d}\, \lambda_2(\Omega)^{1-\frac{1}{d+1}}\,(\lambda_2(\Omega)-\lambda_2(\Theta))^\frac{1}{d+1}\\
&\le C\,k^{2+\frac{4}{d}}\, \lambda_2(\Omega)^{1-\frac{1}{d+1}}\,(\lambda_2(\Omega)-\lambda_2(\Theta))^\frac{1}{d+1}.
\end{aligned}
$$
Assume now $\lambda_2(\Omega) \le 2\, \lambda_2(\Theta)$, by Lemma \ref{lm:B}, since $\Omega\cap\Theta\subset \Omega$ and $\Omega\cap\Theta\subset \Theta$, we obtain 
$$
\begin{aligned}
\frac{1}{\lambda_k(\Omega)}-\frac{1}{\lambda_k(\Omega\cap\Theta)}&\le \exp(1/(4\pi))\,k\,\lambda_k(\Omega)^{d/2} (T(\Omega)-T(\Omega\cap\Theta)),\\
\frac{1}{\lambda_k(\Theta)}-\frac{1}{\lambda_k(\Omega\cap\Theta)}&\le \exp(1/(4\pi))\,k\,\lambda_k(\Theta)^{d/2} (T(\Theta)-T(\Omega\cap\Theta)).
\end{aligned}
$$ 
We combine this with Lemma \ref{lm:CY} and the minimality of $\Theta$ for $\lambda_2$ to get 
$$
\begin{aligned}
\frac{1}{\lambda_k(\Omega)}-\frac{1}{\lambda_k(\Omega\cap\Theta)}&\le \exp(1/(4\pi))\,k^2\,\left(1+\frac{4}{d}\right)^\frac{d}{2}\,\lambda_2(\Omega)^{d/2}  \big(T(\Omega)-T(\Omega\cap\Theta) \big),\\
\frac{1}{\lambda_k(\Theta)}-\frac{1}{\lambda_k(\Omega\cap\Theta)}&\le \exp(1/(4\pi))\,k^2\,\left(1+\frac{4}{d}\right)^\frac{d}{2}\,\lambda_2(\Omega)^{d/2}  \big(T(\Theta)-T(\Omega\cap\Theta) \big).
\end{aligned}
$$
And we deduce, adding these two inequalities 
$$
\left| \frac{1}{\lambda_k(\Omega)}-\frac{1}{\lambda_k(\Theta)}\right|\le \exp(1/(4\pi))\,k^2\,\left(1+\frac{4}{d}\right)^\frac{d}{2}\,\lambda_2(\Omega)^{d/2} \big(T(\Omega) + T(\Theta) -2\,T(\Omega\cap\Theta)\big),
$$
which rewrites, using Lemma \ref{lm:CY} again, as
$$
|\lambda_k(\Omega)-\lambda_k(\Theta)| \le C_d k^{2+\frac{4}{d}}\,\lambda_2(\Omega)^{2+\frac{d}{2}}\, \big(T(\Omega) + T(\Theta) -2\,T(\Omega\cap\Theta) \big).
$$

It remains to estimate the factor $T(\Omega) + T(\Theta) -2\,T(\Omega\cap\Theta)$, to do so, we rewrite it 
$$
T(\Theta) -T(\Omega)+2\, \big(T(\Omega)-\,T(\Omega\cap\Theta) \big).
$$

In Lemma \ref{lm:torvol} we have already computed 
$$
T(\Omega)-T(\Omega\cap\Theta) \le C_d\,|\Omega\setminus \Theta|^\frac{\alpha_d}{d+1}.
$$
Choosing rightfully the two balls that compose $\Theta$, so that they are close to a minimizer for the {\it Fraenkel 2-asymmetry} of $\O$ defined in Equation \eqref{2Fasym}, we get 
$$
|\Omega\setminus \Theta|\le \omega_d \,\mathcal{F}_2(\Omega).
$$
Then we have by the quantitative Krahn-Szeg\"o inequality \eqref{eq:qKSI},
$$
|\Omega\setminus \Theta|\le C_d\, \lambda_2(\Theta)^{-\frac{1}{d+1}}\,(\lambda_2(\Omega)-\lambda_2(\Theta))^\frac{1}{d+1}.
$$
Combining the two we get that 
$$
2\,(T(\Omega)-T(\Omega\cap\Theta))\le C_d \,\lambda_2(\Theta)^{-\frac{\alpha_d}{(d+1)^2}}\,(\lambda_2(\Omega)-\lambda_2(\Theta))^\frac{\alpha_d}{(d+1)^2}.
$$
Now, from the Kohler-Jobin inequality for the second eigenvalue (\ref{lm:KJ2}), and Saint Venant inequality \ref{eq:SVI} and since $\lambda_2(\Omega) \le 2\, \lambda_2(\Theta)$, we obtain
$$
\begin{aligned}
T(\Theta)-T(\Omega)&\le T(\Omega)\,\left[\left(\frac{\lambda_2(\Omega)}{\lambda_2(\Theta)}\right)^\frac{d+2}{2}-1\right]\\
&\le C_d \lambda_2(\Theta)^{-\frac{\alpha_d}{(d+1)^2}}\,[\lambda_2(\Omega)-\lambda_2(\Theta)]^\frac{\alpha_d}{(d+1)^2}.
\end{aligned}
$$
Finally, we obtain using the upper bound $\lambda_2(\Omega)\le2\,\lambda_2(\Theta)$,
$$
\begin{aligned}
\left|\lambda_k(\Omega)-\lambda_k(\Theta)\right|&\le C_d\,k^{2+\frac{4}{d}}\,\lambda_2(\Omega)^{2+\frac{d}{2}}\,\lambda_2(\Theta)^{-\frac{\alpha_d}{(d+1)^2}}\,(\lambda_2(\Omega)-\lambda_2(\Theta))^\frac{\alpha_d}{(d+1)^2}\\
&\le C_d\,k^{2+\frac{4}{d}}\,\lambda_2(\Omega)^{1-\frac{\alpha_d}{(d+1)^2}}\,(\lambda_2(\Omega)-\lambda_2(\Theta))^\frac{\alpha_d}{(d+1)^2}.
\end{aligned}
$$
\end{proof}
\section{Proof of Theorems \ref{TH2bis} and \ref{TH2}}\label{sec4}
The statement of Theorem \ref{TH2bis} is a consequence of the more technical Theorem \ref{TH2}. We will prove first Theorem \ref{TH2} and get, as a consequence, Theorem \ref{TH2bis}.

The idea is that using the two subsets $\Omega^+$ and $\Omega^-$ we are able to benefit from the sharp estimation \eqref{eq:lbd1} that was proved for the first eigenvalue and thus obtain the sharp exponent $1/2$.

\begin{proof} (of Theorem \ref{TH2})
Take $\Omega$ in $\A$ and $\Theta$ a disjoint union of two balls of volume $\omega_d/2$ and fix $k\ge 1$. Now take  $\Omega^+$ and $\Omega^-$ two disjoint open subsets of $\Omega$ such that $\lambda_2(\Omega) \ge \max(\lambda_1(\Omega^+), \lambda_1(\Omega^-))$. We know the existence of such sets by the decomposition lemma \ref{lm:decomp}. 
First, as in the proof Theorem \ref{TH1}, if $\lambda_2(\Omega) > 2\, \lambda_2(\Theta)$, we know from Lemma \ref{lm:CY} that 
$$
\begin{aligned}
|\lambda_k(\Omega^+\sqcup\Omega^-)-\lambda_k(\Theta)|&\le k^\frac{2}{d} \left(1+\frac{4}{d}\right)\, (\lambda_2(\Omega^+\sqcup\Omega^-)+\lambda_2(\Theta)),\\
&\le k^\frac{2}{d} \left(1+\frac{4}{d}\right) \,(\lambda_2(\Omega)+\lambda_2(\Theta)),
\end{aligned}
$$ 
and then, by a similar computation,
$$
|\lambda_k(\Omega^+\sqcup\Omega^-)-\lambda_k(\Theta)| \le C_d\,k^{2+\frac{4}{d}}\, \lambda_2(\Omega)^{\frac{1}{2}}\,(\lambda_2(\Omega)-\lambda_2(\Theta))^\frac{1}{2}.
$$

Now, consider $\lambda_2(\Omega) \le 2\,\lambda_2(\Theta)$.\\
We introduce for $0<\eps<1$, two bounded subsets $\Omega_\eps^+$ and $\Omega_\eps^-$ of $\Omega^+$ and $\Omega^-$ respectively, satisfying
$$
\begin{array}{l}
\lambda_1(\Omega_\eps^+) \le \lambda_1(\Omega^+) + \eps\\
\lambda_1(\Omega_\eps^-) \le \lambda_1(\Omega^-) + \eps\\
\lambda_k(\Omega_\eps^+\sqcup\Omega_\eps^-) \le \lambda_k(\Omega^+\sqcup\Omega^-) + \eps
\end{array}
$$
and consider two balls $B_\eps^+$ and $B_\eps^-$ of respective volumes $|\Omega_\eps^+|$ and $|\Omega_\eps^-|$, we will specify their position later in the proof, at the moment, they do not need to be disjoint.

Then, we can compute
$$
|\lambda_k(\Omega^+\sqcup \Omega^-) -\lambda_k(\Theta)| \le \eps+  |\lambda_k(\Omega_\eps^+\sqcup \Omega_\eps^-) -\lambda_k(B_\eps^+\cup B_\eps^-)|+ |\lambda_k(B_\eps^+\cup B_\eps^-) -\lambda_k(\Theta)|.
$$
For each term, as in the previous proof, by Lemma \ref{lm:B} we obtain the two estimates
\begin{equation}
\begin{array}{c}\label{eq:page91}
|\lambda_k(\Omega_\eps^+\sqcup\Omega_\eps^-)-\lambda_k(B_\eps^+\cup B_\eps^-)| \\
 \le C_d\, k^{2+\frac{4}{d}}\,(\lambda_2(\Omega)+\eps)^{2+\frac{d}{2}}\, (T(\Omega_\eps^+\sqcup\Omega_\eps^-) + T(B_\eps^+\cup B_\eps^-) -2\,T((\Omega_\eps^+\sqcup\Omega_\eps^-)\cap(B_\eps^+\cup B_\eps^-))),
\end{array}
\end{equation}
and,
\begin{equation}\label{eq:page92}
|\lambda_k(B_\eps^+\cup B_\eps^-)-\lambda_k(\Theta)| \le C_d\, k^{2+\frac{4}{d}}\,(\lambda_2(\Omega)+\eps)^{2+\frac{d}{2}}\,\big (T(B_\eps^+\cup B_\eps^-) + T(\Theta) -2\,T((B_\eps^+\cup B_\eps^-)\cap\Theta)\big).
\end{equation}
We start by working with Inequality \eqref{eq:page91}, since the torsion is an increasing functional for the inclusion of sets and the sets $\Omega_\eps^+$ and $\Omega_\eps^-$ are disjoint,
$$T((\Omega_\eps^+\sqcup\Omega_\eps^-)\cap(B_\eps^+\cup B_\eps^-)) \ge T(\Omega_\eps^+\cap B_\eps^+)+T(\Omega_\eps^-\cap B_\eps^-),$$
therefore, Inequality \eqref{eq:page91} rewrites as
$$
\begin{array}{ll}
|\lambda_k(\Omega_\eps^+\sqcup\Omega_\eps^-)-\lambda_k(B_\eps^+\cup B_\eps^-)|\le& C\, k^{2+\frac{4}{d}}\,(\lambda_2(\Omega)+\eps)^{2+\frac{d}{2}}\cdot\\
&\Big[T(\Omega_\eps^+) + T(B_\eps^+) - 2\,T(\Omega_\eps^+\cap B_\eps^+)\\
&+T(\Omega_\eps^-) + T(B_\eps^-) - 2\,T(\Omega^-\cap B_\eps^-)\\
&+T(B_\eps^+\cup B_\eps^-) - (T(B_\eps^+) + T(B_\eps^-))\Big],
\end{array}
$$
and we only have to estimate each term of the right hand side separately.\\
We know, from following the proof of \cite[Theorem 1.1]{BLNP23}, similarly to the proof of Theorem \ref{TH1} but in the first eigenvalue setting applied to each set $\Omega_\eps^+$ and $\Omega_\eps^-$, that we can choose the two balls $B_\eps^+$ and $B_\eps^-$ such that.
$$ 
\begin{aligned}
T(\Omega_\eps^+) + T(B_\eps^+) - 2\,T(\Omega_\eps^+\cap B_\eps^+) &\le C_d (\lambda_1(\Omega_\eps^+)-\lambda_1(B_\eps^+))^\frac{1}{2},\\
T(\Omega_\eps^-) + T(B_\eps^-) - 2\,T(\Omega_\eps^-\cap B_\eps^-) &\le C_d(\lambda_1(\Omega_\eps^-)-\lambda_1(B_\eps^-))^\frac{1}{2},
\end{aligned}
$$
Since $\Omega_\eps^+$ and $\Omega_\eps^-$ are bounded and the positions of $B_\eps^+$ and $B_\eps^-$ are only dependant on $\Omega_\eps^+$ and $\Omega_\eps^-$ respectively, one can find a vector $x\in\Rd$ such that $x+\Omega_\eps^-$ and $x+B_\eps^-$ are both disjoint of $\Omega_\eps^+$ and $B_\eps^+$. Therefore by invariance of the spectrum and the torsional rigidity by translation of connected components, we can consider that the two balls $B_\eps^+$ and $B_\eps^-$ are disjoint.\\
As a consequence, we obtain 
$$ T(B_\eps^+\cup B_\eps^-) - (T(B_\eps^+) + T(B_\eps^-)) = 0.$$
Now, since $ \max (\lambda_1(\Omega_\eps^+), \lambda_1(\Omega_\eps^-))\le \lambda_2(\Omega) +\eps$, we deduce that 
$$
\begin{aligned}
T(\Omega_\eps^+) + T(B_\eps^+) - 2\,T(\Omega_\eps^+\cap B_\eps^+) &\le C_d (\lambda_2(\Omega) + \eps-\lambda_1(B_\eps^+))^\frac{1}{2},\\
T(\Omega_\eps^-) + T(B_\eps^-) - 2\,T(\Omega_\eps^-\cap B_\eps^-) &\le C_d(\lambda_2(\Omega) + \eps-\lambda_1(B_\eps^-))^\frac{1}{2}.
\end{aligned}
$$
If $|\Omega_\eps^+|, |\Omega_\eps^-|\le \omega_d/2$ we have since the eigenvalue is decreasing for the inclusion of sets that $\lambda_1(B_\eps^+), \lambda_1(B_\eps^-)\ge \lambda_2(\Theta)$ and we can conclude. So it only remains to consider the case $|\Omega_\eps^-| < \omega_d/2 < |\Omega_\eps^+|$, we still have $\lambda_1(\O^+)\ge\lambda_2(\Theta)$, and we claim that
\begin{equation}\label{eq:claim}
\lambda_2(\Omega) + \eps-\lambda_1(B_\eps^+) \le 2\, (\lambda_2(\Omega) + \eps-\lambda_2(\Theta)),
\end{equation}
then we have showed that 
$$
|\lambda_k(\Omega_\eps^+\cup\Omega_\eps^-)-\lambda_k(B_\eps^+\cup B_\eps^-)| \le C_d k^{2+\frac{4}{d}}\,(\lambda_2(\Omega)+\eps)^{2+\frac{d}{2}} \,(\lambda_2(\Omega) + \eps -\lambda_2(\Theta))^\frac{1}{2}.
$$
To prove the claim we just compute, for $B$ any ball of radius $1$, and any $0<t<1$,
$$
\lambda_2(\Theta) = \left(\frac{1}{2}\right)^{-\frac{2}{d}}\,\lambda_1(B)\le \frac{1}{2} \, ( t^{-\frac{2}{d}} + (1-t)^{-\frac{2}{d}}) \, \lambda_1(B),$$
choosing $t = \frac{|B_\eps^+|}{|B|}$ we obtain $t^{-\frac{2}{d}} \,\lambda_1(B) = \lambda_1(B_\eps^+)$ and $(1-t)^{-\frac{2}{d}} \,\lambda_1(B) \le \lambda_1(B_\eps^-) \le \lambda_2(\Omega) +\eps$. And then, we get 
$$
\lambda_2(\Theta) \le \lambda_2(\Omega) + \eps + \frac{1}{2}(\lambda_1(B^+) - (\lambda_2(\Omega)+\eps)),
$$
which proves the claim.

We now consider Inequality \eqref{eq:page92}. Once again we need to consider two cases depending on the volume. If $|B_\eps^+| \le \omega_d/2$ and $|B_\eps^-| \le \omega_d/2$, we choose the two balls $B^1$ and $B^2$ composing $\Theta$ such that $B_\eps^+\subset B^1$ and $B_\eps^-\subset B^2$ and then we are left with estimating the term 
$$
T(B^1) - T(B_\eps^+) + T(B^2) - T(B_\eps^-).
$$
Then, using the scaling, and the fact that 
$$\lambda_1(B_1)\le \lambda_1(B_\eps^+) \le \lambda_1(\O_\eps^+)\le \lambda_2(\O) + \eps\le 2\lambda_2(\Theta) + 1,$$ 
we compute 
$$
\begin{aligned}
T(B^1) - T(B_\eps^+) &\le C_d (\lambda_1(\O_\eps^+) - \lambda_2(\Theta)) \le C_d (\lambda_2(\O) + \eps - \lambda_2(\Theta)),  \\
\end{aligned}
$$
and the same bound holds for $T(B_2) - T(B_\eps^-)$.\\
If we have $|B_\eps^-| < \omega_d/2 < |B_\eps^+|$ by choosing $B^1\subset B_\eps^+$ and $B^2\supset B_\eps^-$, we are left with estimating $T(B_\eps^+)- T(B_\eps^-)$ and with the same arguments we obtain,
$$
T(B_\eps^+)- T(B_\eps^-) \le 2\,C (\lambda_2(\Omega) +\eps -\lambda_2(\Theta)),
$$
and then we have showed 
$$
|\lambda_k(B^+\cup B^-)-\lambda_k(\Theta)| \le C_d\, k^{2+\frac{4}{d}}\,(\lambda_2(\Omega)+\eps)^{2+\frac{d}{2}}\,(\lambda_2(\Omega) +\eps -\lambda_2(\Theta)).
$$
Finally, letting $\eps$ go to $0$ we conclude the proof since  $\lambda_2(\Omega) \le 2\, \lambda_2(\Theta)$.
\end{proof}

From Theorem \ref{TH2} we can now deduce Theorem \ref{TH2bis} which comes as a direct corollary.
\begin{proof} (of Theorem \ref{TH2bis})
Consider $\Omega \in \A$ and choose two disjoints subsets $\Omega^+$, $\Omega^-$ of $\Omega$ satisfying the eigenvalue condition $\lambda_2(\Omega) \ge \max(\lambda_1(\Omega^+), \lambda_1(\Omega^-))$. Since the eigenvalues are decreasing for the inclusion of sets, it holds for all $k\ge 1$,
$$
\lambda_k(\Omega) \le \lambda_k(\Omega^+\sqcup \Omega^-).
$$
Then we can deduce 
$$
\left(\lambda_k(\Omega) - \lambda_k(\Theta)\right)_+ \le \left(\lambda_k(\Omega^+\sqcup\Omega^-) - \lambda_k(\Theta)\right)_+ \le \left|\lambda_k(\Omega^+\sqcup\Omega^-) - \lambda_k(\Theta)\right|.
$$
Finally, we can apply Theorem \ref{TH2} and obtain
$$
\left(\lambda_k(\Omega) - \lambda_k(\Theta)\right)_+ \le C_d \,k^{2+\frac{4}{d}}\,\lambda_2(\Omega)^\frac{1}{2}\,(\lambda_2(\Omega) -\lambda_2(\Theta))^\frac{1}{2}.
$$
\end{proof}
\section{Further remarks and open questions}\label{sec5}
As a conclusion, a few remarks are in order.
\begin{rem}[Sharpness of the exponent $1/2$] We have claimed that $1/2$ should be the sharp exponent in Theorem \ref{TH1}. By this we mean that one cannot find an exponent $\alpha^*> 1/2$ such that for all $\Omega \in \A$ the inequality
\begin{equation}\label{ineq:alphastar}
\left|\lambda_k(\Omega)-\lambda_k(\Theta)\right|\le C\,k^{2+\frac{4}{d}}\,\lambda_2(\Omega)^{1-\alpha^*}\,(\lambda_2(\Omega)-\lambda_2(\Theta))^{\alpha^*}
\end{equation}
holds and that the inequality should be true for $\alpha^* =1/2$. 

Even though we are not able to prove that the inequality is true for the exponent $1/2$ Theorem \ref{TH2bis} gives good hope that it should be, and we can actually show that we cannot expect a better exponent. 

Suppose that \eqref{ineq:alphastar} is true for some $\alpha^*> 1/2$ for any $\Omega$ and $k$. We can take $\Omega \in \A$, bounded. Denoting by $\Omega^{1/2}\sqcup\Omega^{1/2}$ the disjoint union of two copies of $\Omega$ rescaled to have measure $\omega_d/2$ we have for any $k$,
$$
\left| \lambda_k(\Omega) -\lambda_k(B)\right| = 2^{-\frac{1}{d}}\,\left| \lambda_{2k}(\Omega^{1/2}\sqcup\Omega^{1/2}) -\lambda_{2k}(\Theta)\right|.
$$

We then deduce from inequality \eqref{ineq:alphastar}
$$
\left|\lambda_k(\Omega)-\lambda_k(B)\right|\le 2^{-\frac{1}{d}} C\,(2k)^{2+\frac{4}{d}}\,\lambda_2(\Omega^{1/2}\sqcup\Omega^{1/2})^{1-\alpha^*}\,(\lambda_2(\Omega^{1/2}\sqcup\Omega^{1/2})-\lambda_2(\Theta))^{\alpha^*}.
$$
Which implies that 
$$
\left|\lambda_k(\Omega)-\lambda_k(B)\right|\le 2^{2+\frac{4}{d}}\,C\,k^{2+\frac{4}{d}}\,\lambda_1(\Omega)^{1-\alpha^*}\,(\lambda_1(\Omega)-\lambda_1(B))^{\alpha^*}.
$$
Since this inequality would then be true for any $\Omega \in \A$ (also the unbounded sets by continuity of the eigenvalues) and $k\ge1$, it is in contradiction with the fact that $1/2$ is the sharp exponent in Inequality \eqref{eq:lbd1}.\\
In the setting of the first eigenvalue, the sharpness of the exponent $1/2$ can actually be easily observed. Since the second eigenvalue is not critical on the ball $B$, one can find a smooth volume preserving deformation of the ball $\Omega_t$ such that $\lambda_2(\Omega_t) - \lambda_2(B) = c_2\,t + o(t)$ and by minimality of the ball for the first eigenvalue, $\lambda_1(\Omega_t) - \lambda_1(B) = c_1\,t^2 + o(t^2)$, where $c_1$ and $c_2$ are two positive constants. Then having 
$$ |\lambda_2(\Omega_t) - \lambda_2(B)| \le C (\lambda_1(\Omega_t) - \lambda_1(B))^\beta,$$
for $t$ small implies $\beta \le 1/2$.
 \end{rem}

\begin{rem}[General estimate with exponent $1/2$] We proved in Theorem \ref{TH2bis}, for any $\Omega \in \A$
\begin{equation}
\big(\lambda_k(\Omega)-\lambda_k(\Theta)\big)_+\le C\,k^{2+\frac{4}{d}}\,\lambda_2(\Omega)^{\frac{1}{2}}\,(\lambda_2(\Omega)-\lambda_2(\Theta))^\frac{1}{2}.
\end{equation}
which is of course non trivial only when $\lambda_k(\Omega)-\lambda_k(\Theta)\ge 0$. To obtain the full inequality with exponent $\frac{1}{2}$, it remains to prove an inequality of the form
\begin{equation}
\lambda_k(\Theta)-\lambda_k(\Omega)\le C_d\,k^{2+\frac{4}{d}}\,\lambda_2(\Omega)^{\frac{1}{2}}\,(\lambda_2(\Omega)-\lambda_2(\Theta))^\frac{1}{2}.
\end{equation}
Taking into account Theorem \ref{TH2} and Lemma \ref{lm:B} this reduces to the following question:\\
Is there a dimensional constant $C_d$ such that for any $\Omega \in \A$, satisfying $\lambda_2(\Omega) \le 2\,\lambda_2(\Theta)$, the following quantitative Krahn-Szeg\"o inequality holds
\begin{equation}
\frac{\lambda_2(\Omega) - \lambda_2(\Theta)}{\lambda_2(\Theta)} \ge C_d\, \left(\frac{T(\Omega) - T(\Omega_1\sqcup \Omega_2)}{T(\Omega)}\right)^2,
\end{equation}
for some $\Omega_1$ and $\Omega_2$ disjoint subsets of $\Omega$ satisfying $max\{\lambda_1(\Omega_1),\lambda_1(\Omega_2)\} \le \lambda_2(\Omega)$?
\end{rem}

\begin{rem}[About the restriction on the sets $\Omega$] In order to simplify the notations in the proofs, we fixed the volume of $\Omega$ to $\omega_d$ but all the inequalities we proved are actually scale invariant and remain true for any $\Omega$ of finite measure as long as $\Theta$ is chosen with the same measure. As for the restriction to open sets, by continuity for the $\gamma$-convergence of the eigenvalues, the theorems remain true for any quasi open set of finite measure.
\end{rem}

\section*{Acknowledgment}
The author would like to thank an anonymous referee for pointing out a mistake in the proof of Lemma \ref{lm:torvol} in a previous version of this paper.

\printbibliography
\end{document}